\theoremstyle{plain}
\newtheorem{theorem}{Theorem}
\newtheorem{proposition}[theorem]{Proposition}
\newtheorem{lemma}[theorem]{Lemma}
\newtheorem*{corollary*}{Corollary}
\newtheorem{theoremO}{Theorem}
\newtheorem{lemmO}[theoremO]{Lemma}
\theoremstyle{definition}
\newtheorem{definition}{Definition}
\theoremstyle{remark}
\newtheorem{remark}{Remark}
\newtheorem{example}[remark]{Example}
\newcommand{\SC}{{\mathbb C}}  \newcommand{\D}{{\mathbb D}}  
\newcommand{\al}{\alpha}    
  \newcommand{\ve}{\varepsilon}  \newcommand{\ze}{\zeta}
  \newcommand{\vp}{\varphi}  \newcommand{\om}{\omega}
\newcommand{\cB}{{\mathcal B}}
\newcommand{\be}{\begin{equation}}
\newcommand{\ee}{\end{equation}}
\newcommand{\bea}{\begin{eqnarray}}
\newcommand{\eea}{\end{eqnarray}}
\begin{document}
\title{On harmonic Bloch-type mappings}
\author{I. Efraimidis \and J. Gaona \and R. Hern\'andez \and O. Venegas }
\subjclass[2010]{30C25, 30C50, 30D45, 30H30}
\keywords{Bloch functions, harmonic functions, Jacobian, univalent functions, schlicht radius, growth estimates, coefficient estimates.}
\thanks{The first author is supported by a fellowship of the International Program of Excellence in Mathematics at Universidad Aut\'onoma de Madrid (422Q101) and also partially supported by MINECO grant MTM2015-65792-P (ERDF/FEDER). The second and third author are partially supported by Fondecyt Grants \# 1150284.}

\address{Departamento de Matem\'aticas, Universidad Aut\'onoma de Madrid, 28049 Madrid, Spain.} \email{iason.efraimidis@uam.es}
\address{Instituto de Matem\'aticas. Facultad de Ciencias, Universidad de Valpara\'iso, Valpar\'iso, Chile.}\email{jhonattangaona@hotmail.com}
\address{Facultad de Ciencias y Tecnolog\'ia, Universidad Adolfo Ib\'a\~nez, Vi\~na del Mar, Chile.} \email{rodrigo.hernandez@uai.cl}
\address{Departamento de Ciencias Matem\'{a}ticas y F\'{\i}sicas. Facultad de Ingenier\'{\i}a, Universidad Cat\'olica de Temuco, Temuco, Chile.} \email{ovenegas@uct.cl}

\maketitle

\begin{abstract}
Let $f$ be a complex-valued harmonic mapping defined in the unit disk $\D$. We introduce the following notion: we say that $f$ is a Bloch-type function if its Jacobian satisfies 
$$
\sup_{z\in\D}(1-|z|^2)\sqrt{|J_f(z)|}<\infty.
$$
This gives rise to a new class of functions which generalizes and contains the well-known analytic Bloch space. We give estimates for the schlicht radius, the growth and the coefficients of functions in this class. We establish an analogue of the theorem which states that an analytic $\vp$ is Bloch if and only if there exists $c>0$ and a univalent $\psi$ such that $\vp = c \log \psi
'$.
\end{abstract}

\section{Introduction}

\subsection{Bloch functions} Let $\D$ be the unit disk in the complex plane and $\vp$ an analytic function defined in $\D$. We say that $\vp$ is a \emph{Bloch function} if
\be \label{Bloch}
\beta(\vp) = \sup_{z\in\D}(1-|z|^2)|\varphi'(z)|<\infty.
\ee
This defines a seminorm, and the Banach space $\cB$ of all Bloch functions equipped with the norm $\|\varphi\|_\mathcal B = |\varphi(0)|+\beta(\vp) $ is called \emph{Bloch space}. We refer to \cite{ACP}, \cite{Da}, \cite{Po70}, \cite{Pom}, \cite{SW} and \cite{Zhu} for information on the Bloch space.

For $\vp$ analytic in $\D$ the \emph{schlicht radius} $d_\vp(z)$ is defined as the radius of the largest disk lying on the Riemann image of $\vp$ and centered at the point $\vp(z)$, whenever $z$ is not a branch point, \emph{i.e.}$\!$ if $\vp'(z)\neq 0$. At a branch point of $\vp$ the schlicht radius is defined as zero. It was shown in \cite{SW} that every analytic function satisfies
$$
d_\vp(z) \, \leq \, (1-|z|^2)|\vp'(z)|, \qquad  z\in\D.
$$
A similar inequality in the reverse direction was also shown in \cite[\S31]{SW} for the case when the schlicht radius is uniformly bounded. Thus, $\vp\in\cB$ if and only if $\sup_{z\in\D}d_\vp(z)<\infty$.

For univalent functions these inequalities take the simpler form of
\be \label{rad}
\frac{1}{4}(1-|z|^2)|\vp'(z)| \, \leq \, d_\vp(z) \, \leq \, (1-|z|^2)|\vp'(z)|, \qquad  z\in\D
\ee
due to Koebe's $1/4$-Theorem. Note that in this case $d_\vp(z)$ is simply the distance between $\vp(z)$ and the boundary of $\vp(\D)$ and, therefore, $\vp\in\cB$ if and only if $\vp(\D)$ does not contain arbitrarily large disks. 

Yet another close connection between Bloch functions and univalent functions was found in \cite{Po70}. Namely, if $\psi$ is univalent then $\beta(\log\psi') \leq 6$ and, conversely, if $\beta(\vp)\leq 1$ then $\vp=\log \psi'$ for some univalent function $\psi$. 

\subsection{Harmonic mappings} A planar harmonic mapping is a complex-valued harmonic function $f$ defined on a domain $\Omega\subset\mathbb{C}$. When $\Omega$ is simply connected, the mapping has a \emph{canonical decomposition} $f=h+\overline{g}$, where $h$ and $g$ are analytic in $\Omega$. Since the Jacobian of $f$ is given by $J_f=|h'|^2 - |g'|^2$, it is locally univalent and \emph{orientation-preserving} if and only if $|g'|< |h'|$, or equivalently, if $h'(z)\neq0$ and the dilatation $\omega=g'/h'$ has the property $|\omega(z)|<1$ in $\Omega$. We say that $f$ is \emph{orientation-reversing} if $\overline{f}$ is orientation-preserving.

Since the mid-80s and especially after the work of J. Clunie and T. Sheil-Small \cite {CS} in 1984, there has been a great interest in trying to extend the classic results of the analytic world to their harmonic analogues. Some work in this direction for the Bloch space was done by F. Colonna \cite{Co}, whose point of departure was the \emph{metric} characterization of $\cB$, namely, $f\in\cB$ if and only if $f$ is Lipschitz between $\D$ endowed with the hyperbolic metric and $\SC$ endowed with the euclidean metric. For a harmonic mapping $f=h+\overline g\,$ this Lipschitz condition was proved in \cite{Co} to be equivalent to both $h$ and $g$ belonging to $\cB$.

The \emph{schlicht radius} $d_f(z)$ of a harmonic mapping $f=h+\overline{g}$ is defined as the radius of the largest disk which is the injective image of some subdomain of $\D$ and is centered at $f(z)$. We set $d_f(z)=0$ if no such disk exists. A generalization of the \emph{geometric} definition of Bloch functions would be to ask that $f$ satisfy $\sup_{z\in\D}d_f(z)<\infty$. However, we shall prove in Lemma \ref{radius} that if $f$ is univalent and normalized then
$$
\frac{1}{16}(1-|z|^2) (|h'(z)|-|g'(z)|) \; \leq \; d_f(z) \; \leq \; \frac{\pi}{2} (1-|z|^2) |h'(z)|, \qquad z\in\D.
$$
It can also be shown that no two of the above three quantities are comparable. Therefore, an analytic characterization of the geometric definition for harmonic mappings is, as far as we know, yet to be found. 

\subsection{Harmonic Bloch-type functions} Our starting point will be the \emph{analytic} definition \eqref{Bloch}. Noting that the Jacobian of an analytic function $\vp$ is given by $J_\vp = |\vp'|^2$, we feel justified in introducing the following definition. 

\begin{definition}
Let $f=h+\overline{g}$ be harmonic in $\D$. We say that $f$ is a \emph{Bloch-type function} if
$$
\beta(f) \; = \; \sup_{z\in\D}(1-|z|^2)\sqrt{|J_f(z)|}<\infty.
$$
We denote this class of functions by $\cB_H$.
\end{definition}

Indeed, we shall see in Section 2 that this definition gives rise to a class rather than a linear space. However, $\cB_H$ contains the Bloch space defined in \cite{Co}. We shall prove that $\cB_H$ is both affine and linearly invariant. In Section 3 we show a connection between $\cB_H$ and univalent harmonic mappings that resembles Pommerenke's theorem \cite{Po70}. We also study the schlicht radius in $\cB_H$. In Section 4 we give growth and coefficients estimates for sense-preserving functions in $\cB_H$.

\section{The class of harmonic Bloch-type mappings}

Our first task will be to show the affine and linear invariance of $\cB_H$. Throughout the paper we will denote by $\vp_\al \, (\al\in\D)$ the disk automorphism given by $\vp_\al(z)=(\al+z)/(1+\overline{\al}z), z\in\D$. 

\begin{proposition} If $f\in\mathcal{B}_H$ then 
\begin{enumerate}
\item[(i)] $a f+b\overline{f} \in\cB_H$ for any $a,b\in\mathbb C$. (affine invariance) \\

\item[(ii)] $f\circ \vp_\al\in\cB_H$ for any $\al\in\D$.  (linear invariance) 
\end{enumerate}
\end{proposition}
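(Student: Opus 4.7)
The plan is to reduce both parts to short computations involving the Jacobian, since the defining quantity $\beta(f)$ depends on $f$ only through $|J_f|$. The key observation is that for $f=h+\overline{g}$ harmonic in $\D$, we have $f_z=h'$ and $f_{\overline z}=\overline{g'}$, so $J_f=|f_z|^2-|f_{\overline z}|^2=|h'|^2-|g'|^2$. Everything then follows from tracking how these two Wirtinger derivatives transform under affine combinations and under composition with $\vp_\al$.

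For part (i), I would write out the canonical decomposition of $F=af+b\overline{f}$. Since $\overline{f}=g+\overline{h}$, we get $F=(ah+bg)+\overline{(\overline b\,h+\overline a\,g)}$, hence $F_z=ah'+bg'$ and $F_{\overline z}=\overline{\overline b\,h'+\overline a\,g'}$. Expanding $|F_z|^2-|F_{\overline z}|^2$, the mixed term $2\mathrm{Re}(a\overline b\,h'\overline{g'})$ appears with the same coefficient in both squares and cancels, leaving the clean identity
$$
J_F(z) \; = \; (|a|^2-|b|^2)\,J_f(z), \qquad z\in\D.
$$
Taking absolute values and square roots gives $\beta(F)=\sqrt{\bigl||a|^2-|b|^2\bigr|}\;\beta(f)<\infty$, which proves affine invariance (and, incidentally, shows that $\cB_H$ is closed under affine combinations but not under sums in general, matching the earlier remark that it is a class rather than a linear space).

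For part (ii), I would use the fact that $\vp_\al$ is analytic, so the Wirtinger chain rule has no mixed terms: for $F=f\circ\vp_\al$, $F_z(z)=f_z(\vp_\al(z))\,\vp_\al'(z)$ and $F_{\overline z}(z)=f_{\overline z}(\vp_\al(z))\,\overline{\vp_\al'(z)}$. Subtracting the squared moduli gives
$$
J_F(z) \; = \; |\vp_\al'(z)|^{2}\,J_f(\vp_\al(z)),
$$
so $\sqrt{|J_F(z)|}=|\vp_\al'(z)|\sqrt{|J_f(\vp_\al(z))|}$. Then I would invoke the standard Schwarz--Pick identity $(1-|z|^2)|\vp_\al'(z)|=1-|\vp_\al(z)|^2$ to obtain
$$
(1-|z|^2)\sqrt{|J_F(z)|} \; = \; (1-|\vp_\al(z)|^2)\sqrt{|J_f(\vp_\al(z))|}.
$$
Since $\vp_\al$ is a bijection of $\D$ onto itself, taking the supremum over $z\in\D$ is the same as taking it over $w=\vp_\al(z)\in\D$, giving $\beta(F)=\beta(f)<\infty$.

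Neither part presents a real obstacle; the only thing to be careful about is the cancellation of the cross terms in the computation of $J_{af+b\overline f}$ in (i), which is what makes the Jacobian transform by a simple scalar factor. Once that identity is in hand, both invariance statements become immediate, and in fact the computation yields the sharper equalities $\beta(af+b\overline f)=\sqrt{\bigl||a|^2-|b|^2\bigr|}\,\beta(f)$ and $\beta(f\circ\vp_\al)=\beta(f)$.
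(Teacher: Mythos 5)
Your proposal is correct and follows essentially the same route as the paper: the identity $J_{af+b\overline f}=(|a|^2-|b|^2)J_f$ for part (i), and the chain rule together with the Schwarz--Pick identity $(1-|z|^2)|\vp_\al'(z)|=1-|\vp_\al(z)|^2$ for part (ii). The only (harmless) addition is that you record the exact relations $\beta(af+b\overline f)=\sqrt{\bigl||a|^2-|b|^2\bigr|}\,\beta(f)$ and $\beta(f\circ\vp_\al)=\beta(f)$ explicitly.
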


\begin{proof} Let $f=h+\overline{g}$. To prove (i) we write
$$
F = a f+b\, \overline{f} = a h +b g +\overline{\overline{a}g+\overline{b}h}
$$
and compute
$$
J_F = |a h' +b g'|^2 - |\overline{a}g'+\overline{b}h'|^2 = (|a|^2-|b|^2) \, J_f.
$$
The assertion now easily follows. 

For claim (ii) we write $F=f\circ \vp_\al = H+\overline{G}$ and compute
$$
H'(z) = \frac{h'\big(\vp_\al(z)\big) (1-|\al|^2) }{(1+\overline{\al}z)^2}, \qquad G'(z) = \frac{g'\big(\vp_\al(z)\big) (1-|\al|^2) }{(1+\overline{\al}z)^2}. 
$$
Hence
\begin{align*}
(1-|z|^2)\sqrt{|J_F(z)|} &= \frac{(1-|z|^2) (1-|\al|^2) }{|1+\overline{\al}z|^2}\sqrt{|J_f\big(\vp_\al(z)\big)|} \\ 
& = \big(1-|\vp_\al(z)|^2\big) \sqrt{|J_f\big(\vp_\al(z)\big)|}.
\end{align*}
Taking the supremum over $z\in\D$ we get that $\beta(F) = \beta(f)$.
\end{proof}

In what follows, Example \ref{ex-no-space} shows that $\cB_H$ is not a linear space. It also shows that functions in $\cB_H$ may grow arbitrarily fast. Hence, in order to get growth and coefficient estimates in Section 4 we shall restrict ourselves to sense-preserving functions in $\cB_H$. 

\begin{example} \label{ex-no-space}
Consider an analytic function $h$ for which $h'(z) = (1-z)^{-p}$, for some $p>2$. Set $f = h +\overline{h} = 2 \,{\rm Re}\, \{h\}$ and see that, since $J_f \equiv 0$, $f$ belongs to $\cB_H$. Obviously, the identity $\text{id}(z)= z$ belongs to $\cB_H$, but we will see that $f+\text{id}$ does not. Indeed,
$$
J_{f+\text{id}} = |h'+1|^2 - |h'|^2 = 1 +2{\rm Re}\, \{h'\}
$$ 
and therefore, for $0<x<1$ we have
$$
(1-x^2)^2 |J_{f+\text{id}}(x)| = (1+x)^2 \frac{2+(1-x)^p}{(1-x)^{p-2}} \longrightarrow \infty
$$
as $x\to1^-$. 
\end{example} 

Example \ref{ex-extr} shows that the harmonic Bloch space considered in \cite{Co} is strictly contained in $\cB_H$. Recall that in \cite{Co} the definition of a Bloch function $f= h + \overline{g}$ is equivalent to both $h$ and $g$ belonging to $\cB$. 

\begin{example} \label{ex-extr}
Let  $f = h + \overline{g}$ be given by $h(z)=\frac{2}{\sqrt{1-z}}$ and $\omega(z)=(g'/h')(z)=z$. Then $f\in\cB_H$ since $h'(z) = (1-z)^{-3/2}$ and
$$
(1-|z|^2) \sqrt{J_f(z)} = \left( \frac{1-|z|^{2}}{|1-z|} \right)^{3/2} \leq 2 \sqrt{2}.
$$
Note that $h\notin\cB$ since, for $0<x<1$, we have
$$
(1-x^2)|h'(x)| = \frac{1+x}{\sqrt{1-x}} \longrightarrow \infty
$$
as $x\to1^-$. Therefore $f$ is not a Bloch function for \cite{Co}. 
\end{example} 

\section{Univalent functions}

Let $f=h+\overline{g}$ be a harmonic, univalent and sense-preserving mapping in $\D$. Let $\omega = g'/h' : \D\to\D$ be its dilatation and write 
$$
h(z)=\sum_{n=0}^\infty a_n z^n \qquad \text{and} \qquad g(z)=\sum_{n=1}^\infty b_n z^n.
$$
We say that $f\in S_H$ if it satisfies $a_0=1-a_1=0$ and that $f\in S_H^0$ if in addition $b_1=0$. 

A simple use of the Schwarz Lemma \cite[\S 5.4]{Du} yields the sharp inequality \mbox{$|b_2|\leq 1/2$} for functions in $S_H^0$. It takes more ef\mbox{}fort to prove that $|a_2|<49$ in $S_H^0$ \cite[\S 6.3]{Du}, and still, the best known constant $49$ is quite distant from the conjectured $5/2$.

For the larger class $S_H$, we have that $|b_1|<1$ simply because $f$ is sense-preserving. Also, it is possible to translate the preceding inequalities by means of an affine transformation. Given $f\in S_H$, the function 
\be \label{aff}
f_0 = \frac{f-\overline{b_1}\overline{f}}{1-|b_1|^2}
\ee
belongs to $S_H^0$. This transformation is invertible, so that $f =f_0 +\overline{b_1}\overline{f_0}$. Hence, it is not difficult to see that 
\be \label{a_2}
|a_2| < 49 + \frac{|b_1|}{2}
\ee
for functions in $S_H$.

In the recent work \cite{HM} a new \emph{Schwarzian} derivative for harmonic locally univalent functions was defined and studied. Also, a \emph{pre-Schwarzian} derivative was defined as 
$$
P_f = \frac{h''}{h'} - \frac{\overline{\omega}\omega'}{1-|\omega|^2}
$$
and with it the following \emph{Becker-type} criterion for univalence was proved.

\begin{theoremO}[\cite{HM}] \label{Becker}
Let $f=h+\overline{g}$ be a sense-preserving harmonic function in the unit disk with dilatation $\omega$. If for all $z\in\D$ 
$$
|zP_f(z)| + \frac{|z\omega'(z)|}{1-|\omega(z)|^2} \leq \frac{1}{1-|z|^2},
$$
then $f$ is univalent.
\end{theoremO}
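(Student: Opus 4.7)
\emph{Proof proposal.} The plan is to imitate Becker's original argument for analytic functions, which builds a Loewner chain whose subordination is guaranteed by the hypothesis, and then to invoke the harmonic Loewner chain theory to transfer the conclusion to $f = h + \overline{g}$.

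First I would embed $f$ in a one-parameter family $F_t = H_t + \overline{G_t}$, $t\geq 0$, modeled on the Becker chain
\[
F_t(z) = h(e^{-t}z) + (e^{t}-e^{-t})\,z\,h'(e^{-t}z) \;+\; \overline{g(e^{-t}z) + (e^{t}-e^{-t})\,z\,g'(e^{-t}z)},
\]
possibly with a correction in the antiholomorphic part tuned so that the dilatation $\Omega_t = G_t'/H_t'$ behaves well. Step one is to check that $\Omega_t$ maps $\D$ into itself for every $t \geq 0$; the term $|z\omega'(z)|/(1-|\omega(z)|^2) \leq 1/(1-|z|^2)$ in the hypothesis is exactly the Schwarz--Pick-type bound that should yield this after the substitution $\zeta = e^{-t}z$, ensuring that each $F_t$ is locally univalent and sense-preserving, with $F_0 = f$. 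Step two is to verify the Loewner chain condition: the analytic prototype is the PDE $\partial_t F_t(z) = z F_t'(z)\,p(z,t)$ with $\operatorname{Re} p \geq 0$, and for harmonic mappings the analogous chain criterion involves such a transport equation together with control on the time evolution of $\Omega_t$. A direct computation of $\partial_t H_t$ and $\partial_t G_t$ should reduce this condition, after the substitution $\zeta = e^{-t}z$, to the inequality
\[
|\zeta P_f(\zeta)| + \frac{|\zeta\,\omega'(\zeta)|}{1-|\omega(\zeta)|^2} \;\leq\; \frac{1}{1-|\zeta|^2},
\]
because the contribution of $h''/h'$ produces $\zeta P_f$ up to the dilatation correction $\overline{\omega}\omega'/(1-|\omega|^2)$, while the factor $(1-|\zeta|^2)$ on the right emerges from $1-e^{-2t}$ together with the constraint $|z|\leq 1$. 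Once the chain condition is established, standard harmonic Loewner theory gives univalence of each $F_t$ and, in particular, of $f = F_0$.

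The main obstacle I expect is the interplay between the analytic and dilatation parts of the chain. In the purely analytic Becker proof the form of the chain is essentially forced by taking $p \equiv 1$; in the harmonic case the correction terms in $G_t$ must be chosen so that the two summands in the hypothesis---one involving the pre-Schwarzian $P_f$ and one involving $\omega'$---decouple cleanly and match the chain condition on the nose. Getting this algebra to reproduce the stated inequality with the precise constant $1$ on the right-hand side, and to respect the definition of $P_f$ with its nontrivial dilatation term, is the delicate point where most of the work will lie.
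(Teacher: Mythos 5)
This statement is Theorem~A of the paper, quoted verbatim from the reference \cite{HM}; the authors give no proof of it here, so there is no in-paper argument to compare your proposal against. Judged on its own terms, what you have written is a research plan rather than a proof, and the plan has a genuine gap at its center. You propose to build a Becker-type chain $F_t=H_t+\overline{G_t}$ and then ``invoke the harmonic Loewner chain theory to transfer the conclusion to $f$.'' But the Pommerenke--Becker subordination-chain criterion (``if $f_t$ is a Loewner chain then each $f_t$ is univalent'') is a theorem about \emph{analytic} functions; there is no off-the-shelf harmonic analogue that you can cite as a black box, and formulating and proving such an analogue (what replaces the transport equation $\partial_t f_t = zf_t'\,p$, what positivity condition on $p$ forces injectivity when an antiholomorphic part is present, how the time-evolution of the dilatation $\Omega_t$ enters) is precisely the substance of the theorem. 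Every step that would carry this weight is left conditional in your write-up: the antiholomorphic correction in $G_t$ is ``possibly'' tuned, the verification that $\Omega_t$ maps $\D$ into $\D$ ``should'' follow, the chain condition ``should reduce'' to the stated inequality, and you yourself flag the decoupling of the $P_f$ and $\omega'$ terms as ``where most of the work will lie.'' Until those computations are carried out against a precisely stated harmonic chain criterion, the argument does not establish univalence of $f$.

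One concrete observation you could have extracted and proved rigorously: since $P_f=h''/h'-\overline{\omega}\omega'/(1-|\omega|^2)$ and $|\omega|\le 1$, the hypothesis gives
\[
\left|z\,\frac{h''(z)}{h'(z)}\right| \;\le\; |zP_f(z)|+\frac{|z\omega'(z)|}{1-|\omega(z)|^2}\;\le\;\frac{1}{1-|z|^2},
\]
which is exactly Becker's classical criterion for the analytic part $h$, so $h$ is univalent. This is a genuine (and easy) consequence of the hypothesis, but it is far from the conclusion: univalence of $h$ does not imply univalence of $h+\overline{g}$ without quantitative control of $g$ relative to $h$, and the second summand in the hypothesis is there precisely to supply that control. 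If you want to complete the proof, I would suggest either working out the harmonic chain machinery in full or consulting \cite{HM}, where the result is proved.
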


For any $\omega:\D\to\D$ analytic we define its \emph{hyperbolic derivative} by
$$
\omega^*(z) = \frac{\omega'(z) (1-|z|^2)}{1-|\omega(z)|^2}.
$$
We set $\|\omega\|_h = sup_{z\in\D}|\omega^*(z)|$ for its \emph{hyperbolic norm}. See \cite[\S 5]{BM}.

We recall that a sense-preserving homeomorphism $f$ is called \emph{quasiconformal} if it maps infinitesimal circles onto infinitesimal ellipses having ratio of the major over the minor axis bounded by some constant. This is equivalent to saying that its (second complex) dilatation $\om=\overline{f_{\overline{z}}}/f_z$ is bounded away from one, that is, $|\om(z)| \leq k<1$. See \cite[\S 1.2]{Du}.

\subsection{Connection between univalent and Bloch-type functions.} A well-known theorem of Pommerenke \cite{Po70} gives yet another characterization of the analytic Bloch space $\cB$. It states that a function $f$ is Bloch if and only if there exists a constant $c>0$ and a univalent function $g$ such that $f=c\log g'$. The following theorems show a similar connection between harmonic univalent mappings and the class $\cB_H$. 

\begin{theorem} Let $F=H+\overline{G}$ be univalent and sense-preserving in $\D$. Let \mbox{$h=\log(H')$} and consider any $\omega:\D\to\D$ analytic. Then $f=h+\overline{g}$, having dilatation $\omega_f = \omega$, belongs to $\cB_H$.
\end{theorem}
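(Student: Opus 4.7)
The plan is to reduce the statement to a uniform bound on $(1-|z|^2)|H''(z)/H'(z)|$---a quantity depending only on $F$, not on the chosen dilatation $\omega$---and to establish that bound by Koebe-transforming $F$ into the class $S_H$ and invoking the coefficient estimate \eqref{a_2}.

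For the reduction: since $h'=H''/H'$ and $g'=\omega h'$, one has $J_f = |h'|^2(1-|\omega|^2)$, whence
$$
(1-|z|^2)\sqrt{|J_f(z)|} \,\le\, (1-|z|^2)\left|\frac{H''(z)}{H'(z)}\right|, \qquad z\in\D,
$$
so it suffices to bound the right-hand side uniformly in $z\in\D$, independently of $\omega$.

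To do so, fix $a\in\D$ and form the Koebe-type transform
$$
\tilde F(z) \,=\, \frac{F(\vp_a(z))-F(a)}{(1-|a|^2)H'(a)} \,=\, \tilde H(z)+\overline{\tilde G(z)},
$$
where $\tilde G(z)=[G(\vp_a(z))-G(a)]/[(1-|a|^2)\overline{H'(a)}]$. A short computation---using $\vp_a'(0)=1-|a|^2$, $\vp_a''(0)=-2\bar a(1-|a|^2)$ and the univalence and sense-preservation of $F$---shows that $\tilde F\in S_H$ with $|\tilde G'(0)|=|\omega_F(a)|<1$ (where $\omega_F=G'/H'$ is the dilatation of $F$) and
$$
\frac{\tilde H''(0)}{2} \,=\, \frac{1-|a|^2}{2}\cdot\frac{H''(a)}{H'(a)}-\bar a.
$$
Applying \eqref{a_2} to $\tilde F$ gives $|\tilde H''(0)/2|<49+|\tilde G'(0)|/2<49+\tfrac12$; combined with the identity above and the triangle inequality, this yields $(1-|a|^2)|H''(a)/H'(a)|<101$ for every $a\in\D$.

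Together with the reduction this shows $\beta(f)<101$, so $f\in\cB_H$. The delicate point is the placement of $\overline{H'(a)}$---rather than $H'(a)$---in the denominator of $\tilde G$: this is exactly what keeps $\tilde F$ sense-preserving with $|\tilde G'(0)|<1$, which is what makes \eqref{a_2} applicable. Once the Koebe normalization is in place, everything else is direct computation.
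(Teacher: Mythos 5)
Your proof is correct and follows essentially the same route as the paper: both reduce to bounding $(1-|z|^2)|H''(z)/H'(z)|$ via $|J_f|\le|h'|^2$, form the same Koebe transform of $F$ into $S_H$, identify $(1-|a|^2)H''(a)/(2H'(a))-\overline{a}$ as the second Taylor coefficient of the analytic part, and invoke \eqref{a_2} to obtain the bound $101$. Your extra remark about placing $\overline{H'(a)}$ in the denominator of $\tilde G$ is a correct clarification of a detail the paper leaves implicit.
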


\begin{proof}
Let $\alpha\in\D$ and compose $F$ with a disk automorphism to obtain 
$$
T(z) = \frac{F\left(\frac{\alpha +z}{1+\overline{\alpha}z}\right) - F(\alpha) }{(1-|\alpha|^2)H'(\alpha)}. 
$$
It can easily be seen that $T\in S_H$ and that the second coefficient of the analytic part of $T$ is given by 
$$
a_2(\alpha) = (1-|\alpha|^2)\frac{H''(\alpha)}{2H'(\alpha)} - \overline{\alpha}.
$$
We turn to $f=h+\overline{g}$ and compute
\begin{align*}
(1-|\alpha|^2)\sqrt{J_f(\alpha)} & \, \leq \, (1-|\alpha|^2) |h'(\alpha)| \\ 
&= \, (1-|\alpha|^2) \left| \frac{H''(\alpha)}{H'(\alpha)} \right| \\
&= \, 2 |a_2(\alpha)+\overline{\alpha}| \\
&< \, 101, 
\end{align*}
in view of \eqref{a_2}. The proof is complete. 
\end{proof}

In the opposite direction we have the following theorem. 
\begin{theorem}
Let $f=h+\overline g \in \cB_H$ be sense-preserving and suppose that $g\in\cB$. Let $0<\ve<1$. Set
$$
H(z) = \int_0^z \exp\left(\frac{\ve}{c} \,h(\ze)\right) d\ze,
$$
where $c = \sqrt{\beta(g)^2 +\beta(f)^2}$, and consider any analytic $\omega:\D\to\D$ satisfying $\|\omega\|_h\leq (1-\ve)/2$. Then $F=H+\overline{G}$, having dilatation $\omega_F = \omega$, is univalent. 
\end{theorem}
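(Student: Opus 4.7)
The plan is to apply the Becker-type criterion (Theorem~A) to $F=H+\overline G$. Note first that $F$ is sense-preserving and harmonic: indeed $H'(z)=\exp((\ve/c)h(z))$ never vanishes, $G$ is the analytic primitive of $\omega H'$ with $G(0)=0$, and the dilatation of $F$ is exactly $\omega_F=\omega$ with $|\omega|<1$ in $\D$. So Theorem~A applies once the pointwise inequality is verified.

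Next I would compute $P_F$. From $H'=\exp((\ve/c)h)$ one gets $H''/H'=(\ve/c)h'$, hence
$$
P_F(z) \; = \; \frac{\ve}{c}\,h'(z) \; - \; \frac{\overline{\omega(z)}\,\omega'(z)}{1-|\omega(z)|^2}.
$$
Applying the triangle inequality to $|zP_F(z)|$ and adding the remaining $|z\omega'|/(1-|\omega|^2)$ term yields
$$
|zP_F(z)| \, + \, \frac{|z\omega'(z)|}{1-|\omega(z)|^2} \; \leq \; \frac{\ve}{c}\,|z|\,|h'(z)| \; + \; \frac{|z\omega'(z)|\,(1+|\omega(z)|)}{1-|\omega(z)|^2}.
$$

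For the $\omega$-part I would use the hyperbolic norm hypothesis $|\omega^*(z)|\leq(1-\ve)/2$, which rewrites as $|\omega'(z)|/(1-|\omega(z)|^2)\leq\frac{(1-\ve)/2}{1-|z|^2}$; together with $1+|\omega|\leq 2$ this bounds the last term by $\frac{(1-\ve)|z|}{1-|z|^2}\leq\frac{1-\ve}{1-|z|^2}$. For the $h'$-part, the key observation is that $f\in\cB_H$ and $g\in\cB$ combine to control $h'$: since $|h'|^2=J_f+|g'|^2$, we have
$$
(1-|z|^2)|h'(z)| \; \leq \; \sqrt{\bigl((1-|z|^2)\sqrt{|J_f(z)|}\bigr)^2 + \bigl((1-|z|^2)|g'(z)|\bigr)^2} \; \leq \; \sqrt{\beta(f)^2+\beta(g)^2} \; = \; c,
$$
so $\frac{\ve}{c}|z||h'(z)|\leq \ve|z|/(1-|z|^2)\leq \ve/(1-|z|^2)$.

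Summing the two estimates gives exactly $1/(1-|z|^2)$, verifying the hypothesis of Theorem~A; hence $F$ is univalent. The main substantive step is the control of $(1-|z|^2)|h'(z)|$ by the constant $c$, which is what justifies the particular choice of $c=\sqrt{\beta(g)^2+\beta(f)^2}$ in the statement; once that identity is in hand, the rest is a direct verification of Becker's inequality, with $\ve$ accounting for the $h'$ contribution and $1-\ve$ accounting for the dilatation contribution.
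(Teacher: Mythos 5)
Your proposal is correct and follows essentially the same route as the paper: apply the Becker-type criterion, bound $(1-|z|^2)|h'|$ by $c=\sqrt{\beta(f)^2+\beta(g)^2}$ via $|h'|^2=J_f+|g'|^2$, and control the dilatation terms through the hyperbolic-norm hypothesis with the factor $1+|\omega|\leq 2$. The only difference is that you spell out the triangle-inequality bookkeeping slightly more explicitly than the paper does.
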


\begin{proof}
We apply Theorem \ref{Becker} to the function $F$. Since $f\in\cB_H$ and $g\in\cB$, we have that 
$$
(1-|z|^2)^2 |h'(z)|^2 \, \leq \, \beta(f)^2 +(1-|z|^2)^2 |g'(z)|^2 \, \leq \, c^2. 
$$
Hence
$$
\left| \frac{H''(z)}{H'(z)} \right| \, = \, \frac{\ve}{c} |h'(z)| \, \leq \, \frac{\ve}{1-|z|^2}.
$$
Also, the definition of the hyperbolic norm and our hypothesis lead to
$$
\frac{|\omega'(z)|}{1-|\omega(z)|^2} \, \leq \, \frac{\|\omega\|_h}{1-|z|^2} \, \leq \, \frac{1-\ve}{2(1-|z|^2)}.
$$
We may now compute
\begin{align*}
|zP_F(z)| + \frac{|z\omega'_F(z)|}{1-|\omega_F(z)|^2} \, & \leq \, \left| \frac{H''(z)}{H'(z)} \right| + \frac{2|\omega'(z)|}{1-|\omega(z)|^2} \\
& \leq \, \frac{1}{1-|z|^2}
\end{align*}
and conclude that $F$ is univalent by Theorem \ref{Becker}. 
\end{proof}

\subsection{Schlicht radius.} A well-known covering theorem \cite[\S 6.2]{Du} states that all functions in $S_H^0$ contain in their image a disk centered at the origin, having radius $1/16$. (The conjectured constant is $1/6$.) Applying as before the affine transformation \eqref{aff} it is easy to see that
\be \label{cover}
\left\{ w\in\SC \; : \; |w|<\frac{1-|b_1|}{16}\right\} \subset f(\D)
\ee
for every $f\in S_H$ \cite[Corollary 4.5]{CS}.

A result in the opposite direction states that each function in $S_H$ omits some point on the circle $|w|=\frac{\pi}{2}$. In other words
\be \label{Hall}
\big(\SC\backslash f(\D)\big) \bigcap \left\{|w|=\frac{\pi}{2}\right\} \neq \emptyset.
\ee
The constant $\frac{\pi}{2}$ was given by Hall \cite{Ha} and is best possible. See also \cite[\S 6.2]{Du}.

As mentioned in Section 1, the \emph{schlicht radius} $d_f(z)$ of a harmonic mapping $f=h+\overline{g}$ at a point $z\in\D$ is defined as the radius of the largest disk which is the injective image of some subdomain of $\D$ and is centered at $f(z)$. If there is no such disk then we set $d_f(z)=0$. The existence of a universal lower bound for $\sup_{z\in\D}d_f(z)$ is commonly refered to as a \emph{Bloch theorem}. It was shown in \cite{CGH} that openness (\emph{i.e.}$\!$ the property of mapping open sets to open sets) and the normalization $g'(0)=1-h'(0)=0$ are sufficient conditions for a Bloch theorem to hold. Moreover, it was shown that the normalization alone is not a sufficient condition.

Since here we will be concerned only with univalent functions, the schlicht radius coincides with the distance between $f(z)$ and the boundary of $f(\D)$. The following lemma provides us with some estimates. 

\begin{lemma} \label{radius} 
If $f\in S_H$ then 
$$
\frac{1}{16}(1-|z|^2) (|h'(z)|-|g'(z)|) \; \leq \; d_f(z) \; \leq \; \frac{\pi}{2} (1-|z|^2) |h'(z)|,
$$
for all $z\in\D$. 
\end{lemma}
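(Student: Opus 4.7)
The plan is to reduce to the known covering/omission theorems for $S_H$ via a disk automorphism, exactly as one does in the analytic case to derive \eqref{rad} from Koebe's theorems. Given $z\in\D$, I compose $f$ with $\vp_z$ and renormalize: set
$$
T(w) = \frac{f(\vp_z(w)) - f(z)}{(1-|z|^2) h'(z)}, \qquad w\in\D.
$$
A routine check (using $\vp_z(0)=z$ and $\vp_z'(0)=1-|z|^2$) shows that $T$ is harmonic, univalent, sense-preserving, normalized by $T(0)=0$, and its analytic part has derivative $1$ at the origin; hence $T\in S_H$. Moreover the second coefficient of its co-analytic part equals $\omega(z)=g'(z)/h'(z)$, so $|b_1(T)|=|\omega(z)|$.

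For the lower bound I apply the covering inclusion \eqref{cover} to $T$, obtaining
$$
\left\{w\in\SC : |w|< \tfrac{1-|\omega(z)|}{16}\right\}\subset T(\D).
$$
Pulling this back through the affine change of variables $w\mapsto f(z)+(1-|z|^2)h'(z)\,w$, which maps $T(\D)$ onto $f(\D)$, yields a disk of radius $(1-|z|^2)|h'(z)|(1-|\omega(z)|)/16$ inside $f(\D)$ centered at $f(z)$. Since $|h'(z)|(1-|\omega(z)|)=|h'(z)|-|g'(z)|$, this gives
$$
d_f(z)\geq \tfrac{1}{16}(1-|z|^2)\bigl(|h'(z)|-|g'(z)|\bigr).
$$

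For the upper bound I invoke Hall's theorem \eqref{Hall} applied to the same $T\in S_H$: there exists $w_0$ with $|w_0|=\pi/2$ and $w_0\notin T(\D)$. The corresponding omitted point of $f(\D)$ is $f(z)+(1-|z|^2)h'(z)\,w_0$, whose distance from $f(z)$ is exactly $\tfrac{\pi}{2}(1-|z|^2)|h'(z)|$. Since $d_f(z)$ is the distance from $f(z)$ to the boundary of $f(\D)$ (because $f$ is univalent, as observed just before the lemma), this immediately gives $d_f(z)\leq \tfrac{\pi}{2}(1-|z|^2)|h'(z)|$.

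The only delicate point is verifying carefully that $T$ really lies in $S_H$ — essentially checking that pre-composition with an automorphism of $\D$ preserves univalence and sense-preservation, that the affine rescaling by the complex factor $(1-|z|^2)h'(z)$ sends the analytic derivative at $0$ to $1$, and that $f(z)$ is not a branch point so this rescaling makes sense (which is guaranteed because $f\in S_H$ forces $h'(z)\neq 0$ in $\D$). Once this is in place, the two inequalities follow directly from \eqref{cover} and \eqref{Hall}, and no further computation is needed.
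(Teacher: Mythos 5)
Your argument is correct and is essentially identical to the paper's proof: both compose $f$ with a disk automorphism, renormalize to land in $S_H$, apply the covering theorem \eqref{cover} and Hall's theorem \eqref{Hall} to the normalized map, and translate back via the scaling factor $(1-|z|^2)h'(z)$. The only (immaterial) slip is that the relevant coefficient of the co-analytic part of $T$ is $g'(z)/\overline{h'(z)}$ rather than $\omega(z)$ itself, but its modulus is $|\omega(z)|$, which is all you use.
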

\begin{proof}
Let $\al\in\D$ and compose with a disk automorphism to obtain 
$$
F(z) = \frac{f\left(\frac{\al +z}{1+\overline{\al}z}\right) - f(\al) }{(1-|\al|^2)h'(\al)} = H(z) + \overline{G(z)}. 
$$
Since $F\in S_H$, the covering theorem \eqref{cover} and Hall's result \eqref{Hall} imply that the radius $d_F(0)$ of the largest disk centered at the origin and contained in the image of $F$ satisfies
$$
\frac{1-|B_1|}{16} \; \leq \; d_{F}(0) \; \leq \; \frac{\pi}{2}.
$$
We compute 
$$
d_{F}(0) = \frac{d_{f}(\al)}{(1-|\al|^2)|h'(\al)|}
$$
and $B_1=g'(\al)/\overline{h'(\al)}$, the first coefficient of $G$. The inequality follows upon substitution. 
\end{proof}

\begin{theorem} \label{rad-B}
Let $f\in S_H$.
\begin{enumerate}
\item[(i)] If $f\in\cB_H$ then $\displaystyle d_f(z) = O\left(\frac{1}{\sqrt{1-|z|}}\right), |z|\to1^-$.

\item[(ii)] If $d_f(z) = O\left(\sqrt{1-|z|}\right), |z|\to1^-$ then $f\in\cB_H$.
\end{enumerate}
If in addition $f$ is quasiconformal then $f\in\cB_H$ if and only if $\,\sup_{z\in\D}d_f(z)<\infty$. 
\end{theorem}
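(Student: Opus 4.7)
The plan is to derive all three claims from Lemma~\ref{radius} by controlling the dilatation $\omega=g'/h'$ via the Schwarz--Pick inequality. Since $f\in S_H$ is sense-preserving, $\omega:\D\to\D$ with $|\omega(0)|=|b_1|<1$, so Schwarz--Pick gives
$$
1-|\omega(z)| \;\geq\; \frac{(1-|b_1|)(1-|z|)}{1+|b_1||z|} \;\geq\; c_{0}\,(1-|z|), \qquad c_0 = \frac{1-|b_1|}{2}.
$$
Because $|J_f|=|h'|^{2}(1-|\omega|^{2})$ and $|h'|-|g'|=|h'|(1-|\omega|)$, this estimate links the three quantities $(1-|z|^{2})|h'|$, $(1-|z|^{2})\sqrt{|J_f|}$, and $(1-|z|^{2})(|h'|-|g'|)$ appearing in Lemma~\ref{radius} and in the definition of $\cB_H$.

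For (i), I combine the upper estimate $d_f(z)\leq\frac{\pi}{2}(1-|z|^{2})|h'(z)|$ with the identity $(1-|z|^{2})|h'(z)|=(1-|z|^{2})\sqrt{|J_f(z)|}\big/\sqrt{1-|\omega(z)|^{2}}$. Since $f\in\cB_H$ the numerator is bounded by $\beta(f)$, while the denominator satisfies $\sqrt{1-|\omega(z)|^{2}}\geq\sqrt{c_0(1-|z|)}$. Hence $d_f(z)=O(1/\sqrt{1-|z|})$.

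For (ii), I use the lower estimate $(1-|z|^{2})(|h'|-|g'|)\leq 16\,d_f(z)$ in the factorization
$$
(1-|z|^{2})^{2}|J_f| = \bigl[(1-|z|^{2})(|h'|-|g'|)\bigr]\cdot\bigl[(1-|z|^{2})(|h'|+|g'|)\bigr].
$$
Writing $|h'|+|g'|=\tfrac{1+|\omega|}{1-|\omega|}(|h'|-|g'|)$ and invoking $1+|\omega|\leq 2$ together with $1-|\omega|\geq c_0(1-|z|)$ shows that the second bracket is at most $\frac{2}{c_0(1-|z|)}$ times the first. Therefore
$$
(1-|z|^{2})^{2}|J_f(z)| \;\leq\; \frac{2\cdot 16^{2}\,d_f(z)^{2}}{c_0\,(1-|z|)},
$$
which is bounded as $|z|\to1^-$ by the hypothesis $d_f(z)=O(\sqrt{1-|z|})$, so $f\in\cB_H$.

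For the quasiconformal addendum, $|\omega(z)|\leq k<1$ makes $\sqrt{1-|\omega|^{2}}\geq\sqrt{1-k^{2}}$ and $1-|\omega|\geq 1-k$ uniform, so the quantities $(1-|z|^{2})\sqrt{|J_f|}$, $(1-|z|^{2})(|h'|-|g'|)$, and $(1-|z|^{2})|h'|$ are mutually comparable, and Lemma~\ref{radius} makes $d_f(z)$ comparable to $(1-|z|^{2})|h'(z)|$ as well. The equivalence $\sup_{z\in\D}d_f(z)<\infty$ if and only if $f\in\cB_H$ follows. I do not foresee any real obstacle; the step to notice is that Schwarz--Pick on $\omega$ introduces exactly one factor of $\sqrt{1-|z|}$, which accounts for the square-root exponents appearing in (i) and (ii).
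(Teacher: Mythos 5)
Your proof is correct and follows essentially the same route as the paper: both arguments combine the two-sided bound of Lemma~\ref{radius} with the invariant Schwarz lemma applied to the dilatation $\omega$ (your Schwarz--Pick estimate is exactly the paper's Lemma~\ref{L1}), which produces the single factor of $\sqrt{1-|z|}$ in each direction, and both handle the quasiconformal case by replacing that estimate with the uniform bound $|\omega|\leq k<1$. The only differences are cosmetic rearrangements of the same inequalities (e.g.\ your factorization of $|J_f|$ versus the paper's direct use of $\sqrt{(1+|\omega|)/(1-|\omega|)}$), plus a harmless notational clash of your constant $c_0$ with the coefficient $c_0=b_1/a_1$ used elsewhere in the paper.
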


We shall need the following lemma. See \cite{Ga}, page 3. 
\begin{lemmO} \label{L1}
If $\om:\D\to\D$ is analytic then
$$
|\om(z)| \leq \frac{|\om(0)| + |z|}{ 1 +|\om(0)||z|}.
$$
\end{lemmO}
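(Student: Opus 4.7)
The plan is to treat the three assertions separately, each time relating the schlicht radius bounds of Lemma \ref{radius} to the Jacobian through the factorization $\sqrt{|J_f|} = |h'|\sqrt{1-|\omega|^2}$, and then using the Schwarz--Pick estimate of Lemma \ref{L1} to control $|\omega|$ in terms of $|z|$.

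For (i), I would start from the upper estimate $d_f(z) \leq \tfrac{\pi}{2}(1-|z|^2)|h'(z)|$. Since $f$ is sense-preserving, $(1-|z|^2)|h'(z)|=(1-|z|^2)\sqrt{|J_f(z)|}/\sqrt{1-|\omega(z)|^2}\leq \beta(f)/\sqrt{1-|\omega(z)|^2}$. Applying Lemma \ref{L1} to $\omega$ gives
\[
1-|\omega(z)|^2 \;\geq\; \frac{(1-|\omega(0)|^2)(1-|z|^2)}{(1+|\omega(0)||z|)^2} \;\geq\; \frac{(1-|\omega(0)|^2)(1-|z|^2)}{4},
\]
and plugging this into the previous estimate shows $d_f(z) \leq C(f)/\sqrt{1-|z|}$, which is the desired $O$--bound.

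For (ii), I would use the lower estimate $A(z):=(1-|z|^2)(|h'(z)|-|g'(z)|) \leq 16\, d_f(z)$. Writing $|J_f|=(|h'|-|g'|)(|h'|+|g'|)$ and using $|h'|+|g'|=|h'|(1+|\omega|)$ together with $|h'|-|g'|=|h'|(1-|\omega|)$, one gets the algebraic identity
\[
(1-|z|^2)^2|J_f(z)| \;=\; A(z)^2\;\frac{1+|\omega(z)|}{1-|\omega(z)|}.
\]
Lemma \ref{L1} again yields $1-|\omega(z)| \geq \tfrac{1}{2}(1-|\omega(0)|)(1-|z|)$, so the right-hand side is bounded by $C\, d_f(z)^2/(1-|z|)$. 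The hypothesis $d_f(z)=O(\sqrt{1-|z|})$ then gives $(1-|z|^2)^2|J_f(z)|=O(1)$ as $|z|\to 1^-$, and continuity handles compact subsets, so $f\in\cB_H$.

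For the final quasiconformal statement, the bound $|\omega(z)|\leq k<1$ forces $\sqrt{1-|\omega(z)|^2}$ to lie in the interval $[\sqrt{1-k^2},1]$, so $(1-|z|^2)\sqrt{|J_f(z)|}$ is comparable to $(1-|z|^2)|h'(z)|$ up to constants depending only on $k$. Lemma \ref{radius} then directly gives, with constants depending on $k$,
\[
\frac{1-k}{16}\,(1-|z|^2)|h'(z)| \;\leq\; d_f(z) \;\leq\; \frac{\pi}{2}\,(1-|z|^2)|h'(z)|,
\]
so boundedness of $d_f$ is equivalent to $f\in\cB_H$. The main obstacle is really the combinatorial bookkeeping in part (ii): one has to set up the factorization of $(1-|z|^2)^2|J_f|$ as $A^2\cdot (1+|\omega|)/(1-|\omega|)$ so that the upper estimate on $A$ and the Schwarz--Pick lower estimate on $1-|\omega|$ exactly cancel the decaying factor $\sqrt{1-|z|}$ coming from the hypothesis on $d_f$.
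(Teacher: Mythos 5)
Your proposal does not prove the statement it was meant to prove. The statement is Lemma \ref{L1}, the Schwarz--Pick-type bound $|\om(z)| \leq (|\om(0)|+|z|)/(1+|\om(0)||z|)$ for an analytic self-map $\om$ of $\D$. What you have written is instead an argument for Theorem \ref{rad-B}, the characterization of membership in $\cB_H$ in terms of the schlicht radius; moreover, that argument repeatedly \emph{invokes} Lemma \ref{L1} as a known tool, so even reinterpreted it could not stand as a proof of the lemma without circularity. (The paper itself does not prove Lemma \ref{L1} but quotes it from Garnett's book.)

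For completeness, here is the argument you should have given. Put $a=\om(0)$ and let $\vp_a(w)=(a+w)/(1+\overline{a}w)$ be the disk automorphism with $\vp_a(0)=a$. Then $\vp_a^{-1}\circ\om$ is an analytic self-map of $\D$ fixing the origin, so the Schwarz lemma yields $|\vp_a^{-1}(\om(z))|\leq |z|$; that is, $\om(z)=\vp_a(w)$ for some $w$ with $|w|\leq|z|$. From the identity
$$
1-|\vp_a(w)|^2=\frac{(1-|a|^2)(1-|w|^2)}{|1+\overline{a}w|^2}
\;\geq\; \frac{(1-|a|^2)(1-|w|^2)}{(1+|a||w|)^2}
\;=\; 1-\left(\frac{|a|+|w|}{1+|a||w|}\right)^2,
$$
where the middle step uses $|1+\overline{a}w|\leq 1+|a||w|$, one gets $|\om(z)|=|\vp_a(w)|\leq (|a|+|w|)/(1+|a||w|)\leq (|a|+|z|)/(1+|a||z|)$, the last inequality because $t\mapsto (|a|+t)/(1+|a|t)$ is increasing on $[0,1)$.
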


\begin{proof}[Proof of Theorem \ref{rad-B}]
Note that $f\in\cB_H$ is equivalent to 
$$
(1-|z|^2) |h'(z)| \sqrt{1-|\om(z)|^2} \leq \beta(f), \quad z\in\D.
$$
Also note that $\om(0)=b_1$. An application of lemmas \ref{radius} and \ref{L1} yields
$$
d_f(z) \; \leq \; \frac{\pi}{2} (1-|z|^2) |h'(z)| \; \leq \; \frac{\pi}{2} \frac{\beta(f)}{\sqrt{1-|\om(z)|^2}}  \; \leq \; \frac{\pi}{2} \sqrt{\frac{1+|b_1|}{1-|b_1|}} \frac{\beta(f)}{\sqrt{1-|z|}}, 
$$
so that claim (i) is proved. For assertion (ii) we use again lemmas \ref{radius} and \ref{L1} to get
$$
(1-|z|^2)\sqrt{J_f(z)} \; \leq \; 16 \, d_f(z) \sqrt{\frac{1+|\om(z)|}{1-|\om(z)|}} \; \leq \; 16 \sqrt{2} \sqrt{\frac{1+|b_1|}{1-|b_1|}} \frac{d_f(z)}{\sqrt{1-|z|}},
$$
thus $f\in\cB_H$. 

Suppose now that $f$ is quasiconformal and see that its dilatation $\om=g'/h':\D\to\D$ satisfies 
$$
\|\om\|_\infty \; = \; \sup_{z\in\D} |\om(z)| \; < \; 1.
$$
Arguing as before but using only Lemma \ref{radius} we get 
$$
d_f(z) \; \leq \; \frac{\pi}{2} \frac{\beta(f)}{\sqrt{1-\|\om\|_\infty}}
$$
and in the opposite direction 
$$
(1-|z|^2)\sqrt{J_f(z)} \; \leq \; 16 \, d_f(z) \sqrt{\frac{1+\|\om\|_\infty}{1-\|\om\|_\infty}}.
$$
The proof is complete. 
\end{proof}

\section{Growth and coefficients estimates}
For a harmonic sense-preserving function $f = h + \overline{g}$ with dilatation $\om=g'/h':\D\to\D$, we write the power series 
$$
h(z)=\sum_{n=0}^\infty a_n z^n, \quad g(z)=\sum_{n=1}^\infty b_n z^n \quad \text{and} \quad \om(z)=\sum_{n=0}^\infty c_n z^n. 
$$
Of course $c_0=b_1/a_1$. We will also make use of the notation
$$
M_\infty(r,f) = \max_{|z|=r} |f(z)|.
$$
We now present some growth and coefficients estimates for the class $\cB_H$. Note, however, that these bounds are not uniform throughout $\cB_H$, but rather, to each of its subclasses having prescribed $|c_0|$.

\begin{theorem}\label{T1}
If $f = h + \overline{g} \in \cB_H$ is sense-preserving then
$$
\max\{|h(z)-a_0|, |g(z)| \} \leq \beta(f) \sqrt{\frac{1+|c_0|}{1-|c_0|}} \frac{r}{\sqrt{1-r^2}}, \quad |z|=r.
$$
This estimate is sharp in order of magnitude. 
\end{theorem}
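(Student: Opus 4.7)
The plan is to reduce the statement to a pointwise bound on $|h'(z)|$ and then obtain the growth estimate by integrating along a radial segment. Because $f$ is sense-preserving, the Jacobian factors as $J_f = |h'|^2(1-|\omega|^2)$, where $\omega = g'/h'$ is analytic from $\D$ into $\D$, and the hypothesis $f\in\cB_H$ becomes
$$
(1-|z|^2)|h'(z)|\sqrt{1-|\omega(z)|^2} \;\leq\; \beta(f), \qquad z\in\D.
$$
Thus I would isolate $|h'(z)|$ and replace the factor $\sqrt{1-|\omega(z)|^2}$ with a lower bound expressed purely in terms of $|z|$ and $|c_0|$.

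For this lower bound I apply Lemma A (the Schwarz--Pick inequality) to $\omega$. Since $\omega(0)=b_1/a_1=c_0$, Lemma A yields $|\omega(z)|\leq(|c_0|+|z|)/(1+|c_0||z|)$, and a short algebraic computation gives
$$
1-|\omega(z)|^2 \;\geq\; \frac{(1-|c_0|^2)(1-|z|^2)}{(1+|c_0||z|)^2} \;\geq\; \frac{1-|c_0|}{1+|c_0|}(1-|z|^2).
$$
Inserting this into the displayed inequality above produces the pointwise bound
$$
|h'(z)| \;\leq\; \beta(f)\sqrt{\frac{1+|c_0|}{1-|c_0|}}\,(1-|z|^2)^{-3/2}.
$$

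The growth estimate for $h$ now follows by integrating $|h'|$ along the radial segment from $0$ to $z=re^{i\theta}$. Since $(1-t^2)^{-3/2}$ has antiderivative $t/\sqrt{1-t^2}$, I obtain
$$
|h(z)-a_0| \;\leq\; \int_0^r |h'(te^{i\theta})|\,dt \;\leq\; \beta(f)\sqrt{\frac{1+|c_0|}{1-|c_0|}}\,\frac{r}{\sqrt{1-r^2}}.
$$
For $g$, observe that $g(0)=0$ and that sense-preservation gives $|g'(z)|=|\omega(z)||h'(z)|\leq |h'(z)|$, so the identical radial integration yields the same bound on $|g(z)|$. Taking the maximum of the two bounds gives the asserted inequality.

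Finally, sharpness in order of magnitude is witnessed by Example \ref{ex-extr}: with $h(z)=2/\sqrt{1-z}$ and $\omega(z)=z$ one has $c_0=0$, and at $z=r$ the left-hand side behaves like $|h(r)-h(0)|=2/\sqrt{1-r}-2\sim 2/\sqrt{1-r}$, while the right-hand side is of order $r/\sqrt{1-r^2}\sim 1/\sqrt{1-r}$ as $r\to 1^-$. The argument is essentially mechanical; if there is any subtle point at all it lies in squeezing the clean factor $\sqrt{(1+|c_0|)/(1-|c_0|)}$ out of the Schwarz--Pick estimate, which is routine.
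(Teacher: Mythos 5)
Your proof of the inequality is correct and follows essentially the same route as the paper: both reduce the hypothesis to the pointwise bound $(1-|z|^2)|h'(z)|\sqrt{1-|\omega(z)|^2}\leq\beta(f)$, invoke Lemma \ref{L1} (Schwarz--Pick) to replace $\sqrt{1-|\omega(z)|^2}$ by $\sqrt{(1-|c_0|)/(1+|c_0|)}\,\sqrt{1-|z|^2}$, and integrate $(1-t^2)^{-3/2}$ radially; the bound for $g$ via $|g'|\leq|h'|$ is also identical. The one place you fall short is sharpness: you only exhibit the example with $c_0=0$, whereas the theorem's constant depends on $|c_0|$ and the paper explicitly frames its estimates as being relative to the subclass with prescribed $|c_0|$. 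The paper therefore constructs, for each $t\in[0,1)$, the function with $h'(z)=(1-z)^{-3/2}$ and dilatation $\omega(z)=t+(1-t)z$ (so $c_0=t$), verifies that it lies in $\cB_H$, and checks the growth of both $h$ and $g$; your argument should at least indicate how to realize arbitrary $|c_0|$ if the sharpness claim is to be read per subclass.
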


\begin{proof}
Let $|z|=r<1$ and write
$$
h(z)-a_0 \, = \, \int_0^z h'(\ze) d\ze \, = \, z \int_0^1 h'(tz) dt.
$$
We have
$$
|h(z)-a_0| \leq r \int_0^1 |h'(tz)| dt \leq r \int_0^1 \frac{\beta(f)}{(1-r^2t^2) \sqrt{1-|\om(tz)|^2}} dt,
$$
since $f\in\cB_H$. We use Lemma \ref{L1} to get
$$
|h(z)-a_0| \, \leq \, \beta(f) \sqrt{\frac{1+|c_0|}{1-|c_0|}}\, r \int_0^1 \frac{dt }{(1-r^2t^2)^{3/2}}.
$$
We compute the integral
$$
\int_0^1 \frac{dt }{(1-r^2t^2)^{3/2}} \, = \, \frac{1}{\sqrt{1-r^2}}
$$
and thus complete the proof of the desired inequality for the function $h$.

We easily get the same bound for $g$ by computing
$$
|g(z)| \, \leq \, r \int_0^1 |g'(tz)| dt
$$
and using the fact that $|g'|\leq|h'|$.

We now prove the sharpness of the order of magnitude. When $c_0=0$, both inequalities (for functions $h$ and $g$) are optimal in view of example \ref{ex-extr}. Our considerations here will contain this as a special case. We take $f = h +\overline{g}$, for which $h'(z) = (1-z)^{-3/2}$, as in example \ref{ex-extr}, but here we take the dilatation to be a self-map of $\D$ whose image is a horodisk centered at some $t\in[0,1)$, that is, $\omega(z) = (g'/h')(z)= t +(1-t)z$. We see that $f\in\cB_H$ since 
\begin{align*}
(1-|z|^2) \sqrt{J_f(z)} & \, = \, \frac{1-|z|^{2}}{|1-z|^{3/2}}\sqrt{1-|\omega(z)|^2} \\
& \, = \,  \frac{1-|z|^{2}}{|1-z|} \sqrt{ \frac{1-|z|^{2} -2t {\rm Re}\,\big(\overline{z}(1-z)\big) -t^2|1-z|^2}{|1-z|}  }\\
& \, \leq \, 2 \sqrt{2}\sqrt{1+t}.
\end{align*}
The sharpness of the inequality for $h$ is now obvious since $h(z)=\frac{2}{\sqrt{1-z}}$ in our example. 

For the function $g$ of this example we compute
$$
g'(z) = \frac{1}{(1-z)^{-3/2}} -\frac{1-t}{\sqrt{1-z}}.
$$
Integrating we get
$$
g(z)=\frac{2}{\sqrt{1-z}}+2(1-t)\sqrt{1-z},
$$
hence, for every $\ve>0$ we have that
$$
(1-x)^{1/2-\ve} |g(x)| \longrightarrow \infty,
$$
when $x\to1^-$. The proof is complete.
\end{proof}

\pagebreak
\begin{theorem}\label{T2}
If $f = h + \overline{g} \in \cB_H$ is sense-preserving then
$$
|a_1| \leq  \frac{\beta(f)}{\sqrt{1-|c_0|^2}} 
$$
and
$$
\max\{|a_n|, |b_n| \} \leq \beta(f) \left(\frac{e}{3}\right)^{3/2} \sqrt{\frac{1+|c_0|}{1-|c_0|}} \sqrt{n+2}, \quad n\geq2.
$$
\end{theorem}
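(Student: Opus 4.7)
The plan is to handle $|a_1|$ by direct evaluation and to treat the higher coefficients through a Cauchy-type estimate on $h'$. First, for $|a_1|$: since $\omega(0)=b_1/a_1=c_0$, one computes $J_f(0)=|a_1|^2-|b_1|^2=|a_1|^2(1-|c_0|^2)$, so evaluating the defining condition at $z=0$ gives $|a_1|\sqrt{1-|c_0|^2}\leq\beta(f)$, which is the first inequality.

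For $n\geq 2$, I would first derive the pointwise estimate
\[
|h'(z)|\leq \beta(f)\sqrt{\frac{1+|c_0|}{1-|c_0|}}\cdot\frac{1}{(1-|z|^2)^{3/2}}, \qquad z\in\D,
\]
by combining the Bloch-type condition $(1-|z|^2)|h'(z)|\sqrt{1-|\omega(z)|^2}\leq\beta(f)$ with Lemma \ref{L1} applied to $\omega$ (exactly as in the proof of Theorem \ref{T1}). Because $f$ is sense-preserving, $|g'|=|\omega h'|\leq |h'|$, so the same bound holds for $|g'|$. The classical Cauchy coefficient estimates $n|a_n|r^{n-1}\leq M_\infty(r,h')$ and $n|b_n|r^{n-1}\leq M_\infty(r,g')\leq M_\infty(r,h')$ then yield
\[
\max\{|a_n|,|b_n|\}\leq \frac{\beta(f)}{n\,r^{n-1}(1-r^2)^{3/2}}\sqrt{\frac{1+|c_0|}{1-|c_0|}}
\]
for every $r\in(0,1)$.

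The last step is to optimize over $r$. A short calculus check shows that $r^{n-1}(1-r^2)^{3/2}$ attains its maximum on $(0,1)$ at $r^2=(n-1)/(n+2)$, with value $\bigl(\tfrac{n-1}{n+2}\bigr)^{(n-1)/2}\bigl(\tfrac{3}{n+2}\bigr)^{3/2}$. Inserting this choice of $r$ and then applying the elementary inequality $(1+3/(n-1))^{(n-1)/2}\leq e^{3/2}$ (a consequence of $(1+x/m)^m\leq e^x$) produces a bound of the form $\beta(f)(e/3)^{3/2}\sqrt{(1+|c_0|)/(1-|c_0|)}\cdot(n+2)^{3/2}/n$. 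The main obstacle I foresee is the cosmetic passage from the exponent $(n+2)^{3/2}/n$ to $\sqrt{n+2}$: these differ by a factor $(n+2)/n$ that tends to $1$ as $n\to\infty$ and is bounded by $2$ for $n\geq 2$. To close this gap cleanly one must verify the sharper one-variable inequality $\tfrac{n+2}{n}\bigl(1+\tfrac{3}{n-1}\bigr)^{(n-1)/2}\leq e^{3/2}$ for $n\geq 2$ — an estimate that is tight in the limit $n\to\infty$ and can be established by a careful logarithmic expansion together with a short check at small values of $n$.
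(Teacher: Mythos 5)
Your proposal is correct and follows essentially the same route as the paper: evaluate the Bloch-type condition at the origin for $|a_1|$, bound $M_\infty(r,h')$ (and hence $M_\infty(r,g')$) via the definition of $\cB_H$ together with Lemma \ref{L1}, apply Cauchy's estimates, and optimize at $r^2=(n-1)/(n+2)$. The one step you leave unfinished is exactly the crux you identify, namely $\bigl(1+\tfrac{3}{n-1}\bigr)^{(n-1)/2}\bigl(1+\tfrac{2}{n}\bigr)\leq e^{3/2}$ for $n\geq2$; the paper closes it not by expansion plus spot checks but by showing that $\vp(x)=\bigl(1+\tfrac{3}{x-1}\bigr)^{(x-1)/2}\bigl(1+\tfrac{2}{x}\bigr)$ is increasing on $[2,\infty)$ (via $(\log\vp)''<0$ forcing $(\log\vp)'>\lim_{x\to\infty}(\log\vp)'=0$) and hence bounded by its limit $e^{3/2}$, which is the clean way to handle an inequality that is tight only in the limit.
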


\begin{proof}
For the first inequality we put $z=0$ in the definition of $\cB_H$ and get
$$
\sqrt{|a_1|^2-|b_1|^2} \, \leq \, \beta(f).
$$

Let $n\geq2$. By Cauchy's formula we have that 
$$
|a_n| \, = \, \frac{|h^{(n)}(0)|}{n!} \, = \, \left| \frac{1}{n\,2\pi i} \int_{|\ze|=r} \frac{h'(\ze)}{\ze^n} d\ze\right| \, \leq \, \frac{M_\infty(r,h')}{n \, r^{n-1}},
$$
for any $r\in(0,1)$. Similarly, and also due to the fact that $f$ is sense-preserving, we have that 
$$
|b_n| \, \leq \, \frac{M_\infty(r,g')}{n \, r^{n-1}} \, \leq \, \frac{M_\infty(r,h')}{n \, r^{n-1}}.
$$
The definition of $\cB_H$ implies that 
$$
\max\{|a_n|, |b_n| \} \, \leq \, \frac{\beta(f)}{n \, r^{n-1}(1-r^2)\sqrt{1-M_\infty^2(r,\om)}}.
$$
Using Lemma \ref{L1} we get that 
\begin{align*}
\max\{|a_n|, |b_n| \}  \, & \leq \, \frac{\beta(f) (1+|c_0|r)}{n \, r^{n-1}(1-r^2)^{3/2} \sqrt{1-|c_0|^2}} \\
& \leq \, \frac{\beta(f)}{n} \sqrt{\frac{1+|c_0|}{1-|c_0|}} \frac{1}{r^{n-1}(1-r^2)^{3/2}}.
\end{align*}
This inequality is true for all $r$ in $(0,1)$. Therefore, in order to minimize the expression on the right hand side we see that $r^{n-1}(1-r^2)^{3/2}$ is maximized for $r=\sqrt{\frac{n-1}{n+2}}$. Making this choice we get
\begin{align*}
\max\{|a_n|, |b_n| \}  \leq & \frac{\beta(f)}{n} \sqrt{\frac{1+|c_0|}{1-|c_0|}} \left(\frac{n+2}{n-1}\right)^{\frac{n-1}{2}} \left(\frac{n+2}{3}\right)^{3/2} \\
= & \frac{\beta(f)}{3\sqrt{3}} \sqrt{\frac{1+|c_0|}{1-|c_0|}} \vp(n) \sqrt{n+2},
\end{align*}
where
$$
\vp(x) = \left[\left( 1+ \frac{3}{x-1}\right)^{\frac{x-1}{3}}\right]^{3/2} \left(1+\frac{2}{x}\right).
$$
Note that $\vp(x)\to e^{3/2}$ when $x\to+\infty$. We will now show that $\vp$ increases to its limit. First note that $\vp(x)>0$ for $x\geq2$. We compute
$$
\log\vp(x) = \frac{x-1}{2} \log\left(\frac{x+2}{x-1}\right) +\log\left(\frac{x+2}{x}\right).
$$
Differentiating we get
$$
\psi(x) := \frac{\vp'(x)}{\vp(x)} = \frac{1}{2} \log\left(\frac{x+2}{x-1}\right) - \frac{3x+4}{2x(x+2)}.
$$
One more differentiation yields
$$
\psi'(x) = -\frac{x^2+8}{2x^2(x+2)^2(x-1)}, 
$$
which for $x\geq2$ obviously satisfies $\psi' <0$. Therefore $\psi$ decreases, so that 
$$
\psi(x) > \lim_{x\to\infty} \psi(x) =0,
$$
hence $\vp' >0 $ and the proof is complete.
\end{proof}



\begin{thebibliography}{9}
\bibitem{ACP} J.M. Anderson, J. Clunie, Ch. Pommerenke, On Bloch functions and normal functions, \emph{J. Reine Angew. Math.} \textbf{270} (1974), 12-37.

\bibitem{BM} A. Beardon, D. Minda, The hyperbolic metric and geometric function theory, \emph{Quasiconformal mappings and their applications}, 9-56, Narosa, New Delhi, 2007.  

\bibitem{CGH} H. Chen, P.M. Gauthier, W. Hengartner, Bloch constants for planar harmonic mappings, \emph{Proc. Amer. Math. Soc.} \textbf{128} (2000), no. 11, 3231-3240.

\bibitem{CS} J. Clunie, T. Sheil-Small, Harmonic univalent functions, \emph{Ann. Acad. Sci. Fenn. Ser. A.I} \textbf{9} (1984), 3-25.

\bibitem{Co} F. Colonna, The Bloch constant of bounded harmonic mappings, \emph{Indiana Univ. Math. J.} \textbf{38} (1989), 829-840.

\bibitem{Da} N. Danikas, Some Banach spaces of analytic functions, \emph{Function spaces and complex analysis}, Joensuu, 1997, Univ. Joensuu Dept. Math. Rep. Ser. \textbf{2}, pp. 9-35.

\bibitem{Du} P.L. Duren, \emph{Harmonic Mappings in the Plane}, Cambridge University Press, Cambridge, 2004.

\bibitem{Ga} J.B Garnett, \emph{Bounded Analytic Functions}, revised first edition, Springer, New York, 2007.

\bibitem{Ha} R.R. Hall, A class of isoperimetric inequalities, \emph{J. Analyse Math.} \textbf{45} (1985), 169-180.

\bibitem{HM} R. Hern\'andez, M.J. Mart\'in, Pre-Schwarzian and Schwarzian derivatives of harmonic mappings, \emph{J. Geom. Anal.} \textbf{25} (2015), no. 1, 64-91. 

\bibitem{Po70} Ch. Pommerenke, On Bloch functions, \emph{J. London Math. Soc. (2)} \textbf{2} (1970), 689-695. 

\bibitem{Pom} Ch. Pommerenke, \emph{Boundary Behaviour of Conformal Maps}, Springer-Verlag, Berlin, 1992.

\bibitem{SW} J. Seidel, L. Walsh, On the derivatives of functions analytic in the unit circle and their radii of univalence and of p-valence, \emph{Trans. Amer. Math. Soc.}, \textbf{52} (1942),  128-216.

\bibitem{Zhu} K. Zhu, \emph{Operator Theory in Function Spaces}, Marcel Dekker, Inc., New York, 1990.
\end{thebibliography}
\end{document}